\theoremstyle{definition}
\newtheorem{defn}{Definition}
\titlerunning{Sum-Product Bounds in Dimension 16}
\begin{document}
\title{Sum-Product Bounds \& an Inequality for the
Kissing Number in Dimension 16}
%
%\titlerunning{Abbreviated paper title}
% If the paper title is too long for the running head, you can set
% an abbreviated paper title here
%
\author{Andrew Mendelsohn}
\authorrunning{A. Mendelsohn}
% First names are abbreviated in the running head.
% If there are more than two authors, 'et al.' is used.
%
\institute{Department of EEE, Imperial College London, London, SW7 2AZ, United Kingdom. \\ \href{andrew.mendelsohn18@imperial.ac.uk}{andrew.mendelsohn18@imperial.ac.uk}}
\maketitle
\begin{abstract}
We obtain an inequality for the kissing number in 16 dimensions. We do this by generalising a sum-product bound of Solymosi and Wong for quaternions to a semialgebra in dimension 16. In particular,
we obtain the inequality 
$$k_{16}\geq \frac{\sum_{x \in \mathcal{R}}\left|\mathcal{S}_{x}\right|}{\left|\bigcup_{x \in \mathcal{R}} \mathcal{S}_{x}\right|}-1,$$
where $k_{16}$ is the 16-dimensional kissing number, and $S_x$ and $\mathcal{R}$ are sets defined below. Along the way we also obtain a sum-product bound for subsets of the octonions which are closed under taking inverses, using a similar strategy to that used for the quaternions. We use the fact that the kissing number in eight dimensions is 240 to achieve the result. Namely, we obtain the bound $$\operatorname{max}(|\mathcal{A}+\mathcal{A}|,|\mathcal{A}\mathcal{A}|)\geq \frac{|\mathcal{A}|^{4/3}}{(1928\cdot\lceil \operatorname{log}|\mathcal{A}|\rceil)^{1/3}},$$ where $\mathcal{A}$ is a finite set of octonions such that if $x\in\mathcal{A}$, $x^{-1}\in\mathcal{A}$ also.
\end{abstract}
%
% https://www.intlpress.com/site/pub/pages/journals/items/joc/_home/submissions/index.php or https://www.springer.com/journal/493 ?
%
\section{Introduction}
Given an arbitrary subset $\mathcal{A}$ of a ring, let $\mathcal{A}\mathcal{A}$ be the set of products of elements of $\mathcal{A}$, and $\mathcal{A}+\mathcal{A}$ the set of sums of elements of $\mathcal{A}$; how large can both $|\mathcal{A}\mathcal{A}|$ and $|\mathcal{A}+\mathcal{A}|$ be? In \cite{Erdos}, Erd{\H{o}}s and Szemer{\'e}di showed that for subsets of the integers, $\max (|\mathcal{A}+\mathcal{A}|,|\mathcal{A} \mathcal{A}|) \gg|\mathcal{A}|^{1+\delta}$ for some $\delta>0$. For the quaternions, Chang \cite{chang} showed that $\delta\geq1/54$, and Solymosi and Wong \cite{kissingsumprods} showed $\delta$ can be arbitrarily close to $1/3$. The state of the art for the exponent for subsets of the quaternions is the work of Basit and Lund \cite{improvedquatbd}, in which the authors show $\delta>1/3$.\\
\indent In the first part of this work, we observe that the method of \cite{kissingsumprods} adapts to the octonions, for certain conditioned subsets $\mathcal{A}$. In particular, for a subset $\mathcal{A}\subset\mathbb{O}\setminus \{0\}$, let $e_1,...,e_{|\mathcal{A}/\mathcal{A}|}$ be the elements of $\mathcal{A}/\mathcal{A}:= \mathcal{A}\mathcal{A}^{-1}\cap\mathcal{A}^{-1}\mathcal{A}$, $a_i$ be the number of ways of multiplying an element of $\mathcal{A}$ with an element of $\mathcal{A}^{-1}$ (in that order) to obtain $e_i$, and $b_i$ be the number of ways of multiplying an element of $\mathcal{A}^{-1}$ with an element of $\mathcal{A}$ (in that order) to obtain $e_i$, for $i=1,...,|\mathcal{A}/\mathcal{A}|$. Suppose the $a_i$ and $b_i$ are such that $\sum_i a_ib_i\geq \operatorname{min}(\sum_i a_i^2,\sum_i b_i^2)$, and say without loss of generality $\operatorname{min}(\sum_i a_i^2,\sum_ib_i^2)= \sum_i a_i^2$. Then $$\operatorname{max}\left(|\mathcal{A}+\mathcal{A}|,|\mathcal{A}\mathcal{A}^{-1}|\right)\gg \frac{|\mathcal{A}||\mathcal{A}/\mathcal{A}|^{1/4}}{\lceil \operatorname{log}|\mathcal{A}|\rceil^{1/4}}.$$ For subsets $\mathcal{A}$ satisfying closure under inversion, we obtain $$\operatorname{max}\left(|\mathcal{A}+\mathcal{A}|,|\mathcal{A}\mathcal{A}|\right)\gg \frac{|\mathcal{A}|^{4/3}}{\lceil \operatorname{log}|\mathcal{A}|\rceil^{1/3}},$$ that is, $\delta$ is arbitrarily close to $1/3$. To our knowledge this is the first such bound for subsets of the octonions.\\
\indent Extending this, in the second half we consider a doubling process applied to the octonions which yields a division left semialgebra\footnote{Where a left semialgebra is a vector space over a field with a binary operation for which left multiplication is linear.}, named the `16-ons' to distinguish it from the 16-dimensional algebra obtained from the Cayley-Dickson doubling process, and other `sedenionic' constructions over the reals (see, for instance, \cite{smithseds}). This space is `division' in the sense that every non-zero element has a multiplicative inverse.\\
\indent We extend the argument we applied to the octonions to hold in this space for sets subject to some conditions, which yields an inequality involving the kissing number in 16 dimensions, $k_{16}$, which rearranges to give an abstract lower bound for $k_{16}$ in terms of the ratio of sizes of particular sets of 16-ons, defined below. The result of the first part, giving a sum-product bound for octonions, can then be seen as a `warm-up' demonstrating that the work of \cite{kissingsumprods} adapts to a nonassociative setting.\\
\indent In particular, we find that, when $\mathcal{A}$ is a finite set of 16-ons which have only the first nine entries possibly non-zero and $x\in \mathcal{A}/\mathcal{A}$, letting $\ell(x)$ denote the number of representatives of $x$ in $\mathcal{A}\mathcal{A}^{-1}$, and defining $$\mathcal{R}:=\operatorname{max}_{I}\left\{x \in \mathcal{A} / \mathcal{A}: \ell(x) \geq r(x)\text{ and }2^{I} \leq \ell(x)<2^{I+1}\right\}$$ and $$\mathcal{S}_{x}:=\left\{(a+c, b+d) \in(\mathcal{A}+\mathcal{A}) \times(\mathcal{A}+\mathcal{A}): a b^{-1}=x \text { and } c d^{-1}=\phi(x)\right\},$$
where $x\in\mathcal{R}$ and $\phi: \mathcal{R} \rightarrow \mathcal{R}$ maps an element of $\mathcal{R}$ to the closest (distinct) element in $\mathcal{R}$, then
$$k_{16}\geq \frac{\sum_{x \in \mathcal{R}}\left|\mathcal{S}_{x}\right|}{\left|\bigcup_{x \in \mathcal{R}} \mathcal{S}_{x}\right|}-1.$$ 
We have not used this to concretely lower bound $k_{16}$. Unfortunately, the doubling of the 16-ons, the `32-ons', lose necessary algebraic properties for the argument to hold, so we are unable to use this method for higher dimensional bounds.

\section{Preliminaries}
Given a finite set $\mathcal{A}$ in a real division algebra $\mathbb{A}$, we define $$\mathcal{A}+\mathcal{A} = \{a+b\text{ : }a,b\in\mathcal{A}\}\text{ and }\mathcal{A}\mathcal{A} = \{a\cdot b\text{ : }a,b\in\mathcal{A}\}.$$
Similarly, we let $\mathcal{A}^{-1}$ be the set of the inverses of elements of $\mathcal{A}$. In addition, we will make reference to what has been called the \textit{ratioset} of $\mathcal{A}$: $$\mathcal{A}/\mathcal{A}:= \mathcal{A}\mathcal{A}^{-1}\cap\mathcal{A}^{-1}\mathcal{A}.$$ We also need the notion of the multiplicative energy of a set:
\begin{defn}
Let $\mathcal{A}$ be a finite set in $\mathbb{A}$. Then the \textit{multiplicative energy} of $\mathcal{A}$ is defined $$E(\mathcal{A}) = |\{(a,b,c,d)\in\mathcal{A}^4\text{ : }ca = db\}|.$$
\end{defn}
\noindent We are also interested in the following related set: $$E^\prime(\mathcal{A}) = |\{(a,b,c,d)\in\mathcal{A}^4\text{ : }ab^{-1} = c^{-1}d\}|.$$ Note that when $\mathcal{A}$ is a subset of an associative ring, these two sets coincide, and $E(\mathcal{A}) = E^\prime(\mathcal{A})$. However, this is not prima facie the case for subsets of the octonions $\mathbb{O}$.\\
\indent There is the following bound (using the Cauchy-Schwartz inequality) for associative $\mathbb{A}$:
$$E(\mathcal{A})\geq \frac{|\mathcal{A}|^4}{|\mathcal{A}\mathcal{A}|}.$$ 
We desire a similar style bound for $E^\prime(\mathcal{A})$ with $\mathcal{A}\subset \mathbb{O}$; we have the following lemma:
\begin{lemma}
Let $\mathcal{A}\subset\mathbb{O}$ and $0\not\in\mathcal{A}$. Let $e_1,...,e_{|\mathcal{A}/\mathcal{A}|}$ be the elements of $\mathcal{A}/\mathcal{A}$, $a_i$ be the number of ways of multiplying an element of $\mathcal{A}$ with an element of $\mathcal{A}^{-1}$ (in that order) to obtain $e_i$, and $b_i$ be the number of ways of multiplying an element of $\mathcal{A}^{-1}$ with an element of $\mathcal{A}$ (in that order) to obtain $e_i$, for $i=1,...,|\mathcal{A}/\mathcal{A}|$. Suppose the $a_i$ and $b_i$ are such that $\sum_i a_ib_i\geq \operatorname{min}(\sum_i a_i^2,\sum_i b_i^2)$. Then $$E^\prime(\mathcal{A})\geq \frac{|\mathcal{A}|^4|\mathcal{A}/\mathcal{A}|}{\operatorname{min}\left(|\mathcal{A}\mathcal{A}^{-1}|,|\mathcal{A}^{-1}\mathcal{A}|\right)^2}.$$ When $\mathcal{A}$ is closed under taking inverses, $E^\prime(\mathcal{A})\geq \frac{|\mathcal{A}|^4}{|\mathcal{A}\mathcal{A}|}$.
\end{lemma}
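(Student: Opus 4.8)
The plan is to compute $E'(\mathcal{A})$ exactly by grouping the quadruples $(a,b,c,d)$ by the common value $v:=ab^{-1}=c^{-1}d$. Because $ab^{-1}\in\mathcal{A}\mathcal{A}^{-1}$ while $c^{-1}d\in\mathcal{A}^{-1}\mathcal{A}$, any admissible $v$ lies in $\mathcal{A}\mathcal{A}^{-1}\cap\mathcal{A}^{-1}\mathcal{A}=\mathcal{A}/\mathcal{A}$, so $v=e_i$ for some $i$. As $0\notin\mathcal{A}$, inversion is a bijection $\mathcal{A}\to\mathcal{A}^{-1}$, so the number of $(a,b)\in\mathcal{A}^2$ with $ab^{-1}=e_i$ equals $a_i$ and the number with $c^{-1}d=e_i$ equals $b_i$; using that $\mathbb{O}$ is alternative — $a^{-1}(ab^{-1})=(a^{-1}a)b^{-1}=b^{-1}$, so $b$ is forced once $a$ and $e_i$ are fixed, and symmetrically for $d$ — one gets $a_i,b_i\le|\mathcal{A}|$ and hence $\sum_i a_i,\sum_i b_i\le|\mathcal{A}|^2$. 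Summing over $i$ gives the identity $E'(\mathcal{A})=\sum_{i=1}^{|\mathcal{A}/\mathcal{A}|}a_ib_i$, on which the rest of the proof rests.

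Next I would invoke the hypothesis $\sum_i a_ib_i\ge\min(\sum_i a_i^2,\sum_i b_i^2)$; since both this and the claimed bound are symmetric under swapping $(\mathcal{A}\mathcal{A}^{-1},a_i)\leftrightarrow(\mathcal{A}^{-1}\mathcal{A},b_i)$, assume the minimum is $\sum_i a_i^2$, so that $E'(\mathcal{A})\ge\sum_i a_i^2$. Cauchy--Schwarz over the $|\mathcal{A}/\mathcal{A}|$ summands gives $\sum_i a_i^2\ge(\sum_i a_i)^2/|\mathcal{A}/\mathcal{A}|$, so it remains to obtain a lower bound of the form $\sum_i a_i\gtrsim|\mathcal{A}|^2\,|\mathcal{A}/\mathcal{A}|/\min(|\mathcal{A}\mathcal{A}^{-1}|,|\mathcal{A}^{-1}\mathcal{A}|)$ — i.e.\ that the ratios $ab^{-1}$ actually landing in $\mathcal{A}/\mathcal{A}$ make up at least, roughly, the proportion $|\mathcal{A}/\mathcal{A}|/|\mathcal{A}\mathcal{A}^{-1}|$ of all $|\mathcal{A}|^2$ ordered pairs. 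This is the step I expect to be the crux: one would like a plain averaging/pigeonhole count over $\mathcal{A}\mathcal{A}^{-1}$, but in the nonassociative setting the ratio-multiplicities need not be spread evenly across the two one-sided product sets, so the hypothesis — which is precisely what ties the representation functions of $\mathcal{A}\mathcal{A}^{-1}$ and $\mathcal{A}^{-1}\mathcal{A}$ together — has to be used here; everything else is a routine Cauchy--Schwarz manipulation.

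Finally, for the closed case I would observe that closure under inversion collapses all the relevant sets: $\mathcal{A}^{-1}=\mathcal{A}$ forces $\mathcal{A}\mathcal{A}^{-1}=\mathcal{A}^{-1}\mathcal{A}=\mathcal{A}\mathcal{A}$ and $\mathcal{A}/\mathcal{A}=\mathcal{A}\mathcal{A}$, and substituting $b'=b^{-1},c'=c^{-1}\in\mathcal{A}$ rewrites $ab^{-1}=c^{-1}d$ as $ab'=c'd$, so $a_i$ and $b_i$ both count factorisations of $e_i$ into two elements of $\mathcal{A}$; hence $a_i=b_i$ for all $i$, the hypothesis holds with equality, and $E'(\mathcal{A})=\sum_i a_i^2$. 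Now every pair of $\mathcal{A}^2$ contributes, as its product already lies in $\mathcal{A}\mathcal{A}=\mathcal{A}/\mathcal{A}$, so $\sum_i a_i=|\mathcal{A}|^2$ exactly and Cauchy--Schwarz gives $E'(\mathcal{A})=\sum_i a_i^2\ge|\mathcal{A}|^4/|\mathcal{A}\mathcal{A}|$; this is also the general bound specialised to $|\mathcal{A}\mathcal{A}^{-1}|=|\mathcal{A}^{-1}\mathcal{A}|=|\mathcal{A}/\mathcal{A}|=|\mathcal{A}\mathcal{A}|$, and equivalently the substitution identifies $E'(\mathcal{A})$ with the ordinary energy $E(\mathcal{A})$, whose Cauchy--Schwarz bound uses no associativity. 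The point is that the closed case sidesteps the delicate $\sum_i a_i$ estimate entirely, so it holds unconditionally once the identity $E'(\mathcal{A})=\sum_i a_ib_i$ is in place.
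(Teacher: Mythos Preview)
Your approach matches the paper's exactly: express $E'(\mathcal{A})=\sum_i a_ib_i$, invoke the hypothesis to pass to $\sum_i a_i^2$, apply Cauchy--Schwarz over $m=|\mathcal{A}/\mathcal{A}|$ summands, and then plug in a value for $\sum_i a_i$. At the step you flag as the crux, the paper does not re-use the hypothesis but simply asserts the exact equality $\sum_{i} a_i = |\mathcal{A}|^{2}\,|\mathcal{A}/\mathcal{A}|/|\mathcal{A}\mathcal{A}^{-1}|$ and proceeds; for the closed case it, like you, just notes that $\mathcal{A}=\mathcal{A}^{-1}$ collapses all the sets to $\mathcal{A}\mathcal{A}$.
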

\begin{proof}
Without loss of generality, suppose $\operatorname{min}(\sum_i a_i^2,\sum_i b_i^2) = \sum_i a_i^2$. Let $m:=|\mathcal{A}/\mathcal{A}|$. Observe $\sum_{i=1}^{m}a_i = \frac{|\mathcal{A}||\mathcal{A}^{-1}||\mathcal{A}/\mathcal{A}|}{|\mathcal{A}\mathcal{A}^{-1}|} = \frac{|\mathcal{A}|^2|\mathcal{A}/\mathcal{A}|}{|\mathcal{A}\mathcal{A}^{-1}|}$. Then $E^\prime(\mathcal{A}) = \sum_{i=1}^{m}a_ib_i\geq \sum_{i=1}^{m}a_i^2 = \|(a_1,...,a_{m})\|^2\|(1/\sqrt{m},...,1/\sqrt{m})\|^2\geq \frac{(\sum_{i=1}^m a_i)^2}{m} = \frac{|\mathcal{A}|^4|\mathcal{A}/\mathcal{A}|^2}{|\mathcal{A}/\mathcal{A}||\mathcal{A}\mathcal{A}^{-1}|^2} = \frac{|\mathcal{A}|^4|\mathcal{A}/\mathcal{A}|}{|\mathcal{A}\mathcal{A}^{-1}|^2}$, by Cauchy-Schwartz. When $\mathcal{A}=\mathcal{A}^{-1}$, the inequality simplifies to the stated result. 
\end{proof}
%cant make it work!
\subsubsection{Complex quadrants, quaternionic hexadecants, and octonionic hexapentacontadictants} One can divide up the complex plane into quadrants, and consider the case when a given complex number $a+ib$ has both its coefficients positive, or both its coefficients negative. This corresponds to the complex number lying in the top right or bottom left quadrant of the plane. Moreover, one can say that given two complex numbers, both \textit{lie in the same quadrant} if they both have all positive or all negative coefficients. Then the norm of the sum of any two such complex numbers is larger than the norm of either number alone.\\
\indent In a similar way one can divide up the quaternions, depending on whether a given quaternion has all its coefficients positive or all negative (or not). In this case, following \cite{kissingsumprods}, one says that two quaternions \textit{lie in the same hexadecant} (there being $16 = 2^4$ combinations of the vector of signs of the coefficients for a given quaternion) if they both have all positive, or all negative, coefficients.\\
\indent Finally, we will make the same requirement for the octonions: two octonions \textit{lie in the same hexapentacontadictant} if they both have all positive or all negative coefficients. Then the norm of the sum of any two such octonions is larger than the individual norms of the elements summed.

\subsubsection{The 16-ons} The Cayley-Dickson process is a method to take an algebra $\mathbb{A}_{n-1}$, and create a new algebra $\mathbb{A}_{n}$ with twice the dimension of the old. When applied to $\mathbb{A}_{0}=\mathbb{R}$, we successively get $\mathbb{A}_{1}=\mathbb{C}$, $\mathbb{A}_{2}=\mathbb{H}$, and $\mathbb{A}_{3}=\mathbb{O}$, iterating three times. These are normed division algebras. Hurwitz showed these are the only normed real division algebras; later, Pfister \cite{pfister} gave an identity for writing the product of two sums of 16 squares as the sum of 16 rational squares, essentially showing that norms on real algebras exist in higher dimensions. In particular, in any power of two dimension, there exists an algebra with a norm (which satisfies properties sometimes referred to as being `nicely normed'). However, $\mathbb{A}_{n}$ is not a division algebra once $n>3$, so the norm is not multiplicative: in particular, since zero-divisors exist, one might have $\|ab\| = 0 \ne \|a\|\cdot \|b\|$.\\
\indent It was observed by Conway and Smith that, tweaking the Cayley-Dickson construction, one can create a doubling process that yields multiplicative norms. However, since the only normed division algebras have $n<4$, once at $n=4$ one starts paying for multiplicativity of the norm with other properties: for example, alternativity (that is, $x(xy)=x^2y)$ is lost at this point.\\
\indent The algebras obtained from this adjusted Cayley-Dickson process have been studied in depth by Smith \cite{Smith16ons} (it seems with some collaboration with Conway; this process has been referred to as the `Conway-Smith doubling process' in \cite{LundstromHilbert90}), and called $2^n$-ons. Given two elements of $\mathbb{A}_{n-1}$, we define $\mathbb{A}_n$ as having additive group $\mathbb{A}_{n-1}\times\mathbb{A}_{n-1}$ equipped with the multiplication rule $$(a, b)(c, d)=\left(a c-\overline{b \bar{d}}, \overline{\bar{b} \bar{c}}+\overline{\bar{b} \overline{\bar{a} \overline{\overline{b^{-1}} \bar{d}}}}\right),$$
where $\overline{\cdot}$ is conjugation on $\mathbb{A}_{n-1}$. For the $2^n$-ons, conjugation can be written $\overline{(a,b)} = (\overline{a},-b)$, with $a,b\in\mathbb{A}_{n-1}$. When $n=4$, that is, in the case of the 16-ons, the above simplifies to $$(a, b)(c, d)=\left(a c-d \bar{b}, c b+\bar{a} b^{-1} \cdot b d\right)$$
When $b=0,$ the multiplication is defined $(a,0)(c,d) = (ac,\Bar{a}d)$. We list the basic properties of the $2^n$-ons.
\begin{proposition}\cite[Theorem 15]{Smith16ons}
If $a,b,x,y\in \mathbb{A}_{n}$, $n \geq 0$, and $s,t\in\mathbb{R}$, then
\begin{enumerate}
\item $A$ unique $2^{n}$-on 1 exists, with $1 x=x 1=x$.
\item $A$ unique $2^{n}$-on 0 exists with $0+x=x+0=x$ and
$0 x=x 0=0.$
\item $x+y=y+x,(x+y)+z=x+(y+z)$;
$-x$ exists with $x+(-x)=x-x=0$.
\item There exists a norm $\|\cdot\|$ such that $\|x\|^{2}=x \bar{x}=\bar{x} x$ with $\|x\|^{2}\|y\|^{2}=\|x y\|^{2}$.
\item $s \cdot x y=s x \cdot y=x s \cdot y=x \cdot s y=x \cdot y s$.
\item `Weak linearity' holds: $(x+s) y=x y+s y$ and $x(y+s)=x y+x s$.
\item Left distributivity holds: $x(y+z)=x y+x z$.
\item If $x \neq 0$, a unique $x^{-1}$ exists
satisfying $x^{-1} x=x x^{-1}=1$, equal to $x^{-1}=\bar{x}\|x\|^{-2}$. Inversion commutes with complex conjugation: $\overline{x^{-1}}=\bar{x}^{-1}$.
\item Left alternativity holds: $x \cdot x y=x^{2} y$.
\item Left cancellation holds: $x \cdot x^{-1} y=y$.
\end{enumerate}
\end{proposition}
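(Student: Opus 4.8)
The plan is to prove all ten clauses simultaneously by induction on $n$, since they are mutually dependent: the multiplicativity of the norm (4) underwrites the inverse formula (8), and the inverse $b^{-1}$ appears \emph{inside} the doubling rule defining the product at the next level. The base case $n=0$ is $\mathbb{A}_0=\mathbb{R}$, where every clause is immediate because $\mathbb{R}$ is a commutative ordered field with $\bar r=r$ and $\|r\|=|r|$. For the inductive step I assume (1)--(10) for $\mathbb{A}_{n-1}$ and verify them for $\mathbb{A}_n=\mathbb{A}_{n-1}\times\mathbb{A}_{n-1}$ using the doubling multiplication, the conjugation rule $\overline{(a,b)}=(\bar a,-b)$, and the degenerate rule $(a,0)(c,d)=(ac,\bar a d)$. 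This reproduces the mechanism of \cite[Theorem~15]{Smith16ons}; below I indicate how each clause falls out and isolate the one genuinely delicate computation.

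First I would dispatch the structural clauses (1, 2, 3, 5, 6, 7). The additive group axioms (3) are inherited coordinatewise from $\mathbb{A}_{n-1}$, giving $0=(0,0)$ and $-(a,b)=(-a,-b)$, hence (2). For (1) I take $1=(1,0)$: the degenerate rule gives $(1,0)(c,d)=(c,d)$ at once, while $(c,d)(1,0)$ is checked by substituting $b=0$ on the right of the full formula and reducing with the inductive identity law to obtain $(c,d)$; uniqueness of a two-sided identity is then automatic. Scalar associativity (5), weak linearity (6) and left distributivity (7) all reduce to the observation that, with real scalars central in $\mathbb{A}_{n-1}$ by the inductive (5), every term of the doubling formula is $\mathbb{R}$-linear and additive in the second factor $(c,d)$ (conjugation being additive and $\mathbb{A}_{n-1}$-multiplication distributing); this additivity in the second argument is precisely what makes $\mathbb{A}_n$ a left semialgebra, and it is confirmed by expanding both sides and matching coordinates.

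The analytic heart is (4). I would set $\|(a,b)\|^2:=\|a\|^2+\|b\|^2$ and first establish $x\bar x=\bar x x=\|x\|^2$ by feeding $x=(a,b)$, $\bar x=(\bar a,-b)$ through the doubling formula: the inductive relation $a\bar a=\bar a a=\|a\|^2$ forces the first coordinate of $(a,b)(\bar a,-b)$ to equal $\|a\|^2+\|b\|^2$ and the second to vanish. The multiplicativity $\|xy\|^2=\|x\|^2\|y\|^2$ is the crux and the main obstacle: one must expand both coordinates of $(a,b)(c,d)$, take their $\mathbb{A}_{n-1}$-norms, and show all cross terms cancel so that the total collapses to $(\|a\|^2+\|b\|^2)(\|c\|^2+\|d\|^2)$. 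This is exactly where the nested conjugations and the twisting factor $b^{-1}$ in the rule earn their keep, and where the inductive norm law together with the inductive inverse formula $b^{-1}=\bar b\|b\|^{-2}$ are consumed; the case $b=0$ must be treated separately via the degenerate rule, where $\|(ac,\bar a d)\|^2=\|a\|^2(\|c\|^2+\|d\|^2)$ is transparent since conjugation preserves the norm. I expect the bookkeeping of the conjugations in this cancellation to be the most error-prone part of the whole argument.

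With (4) in hand the remaining clauses follow quickly. Because $x\bar x=\|x\|^2$ is a nonzero real scalar for $x\neq0$, scalar associativity (5) yields $x\cdot(\bar x\|x\|^{-2})=(\bar x\|x\|^{-2})\cdot x=1$, so $x^{-1}=\bar x\|x\|^{-2}$ is a two-sided inverse, and $\overline{x^{-1}}=\bar x^{-1}$ drops out of the conjugation rule; uniqueness of the inverse then follows from left cancellation (10). The two subtle nonassociative identities, left alternativity (9) $x\cdot xy=x^2y$ and left cancellation (10) $x\cdot x^{-1}y=y$, are each verified by substitution into the doubling formula and reduction using the inductive versions of (9), (10) and the norm law. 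These are exactly what the $b^{-1}$-twist in the Conway--Smith rule is engineered to preserve: full alternativity and associativity are sacrificed beyond $n=3$, but the \emph{left} alternative and cancellation laws survive, which is all the later sum-product argument requires.
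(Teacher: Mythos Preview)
The paper does not prove this proposition at all: it is quoted verbatim as \cite[Theorem~15]{Smith16ons} and used as a black box, so there is no in-paper proof to compare your proposal against. Your inductive scheme---carry all ten clauses together from $\mathbb{A}_{n-1}$ to $\mathbb{A}_n$, with the norm multiplicativity (4) as the computational core feeding the inverse formula (8) and then the one-sided identities (9), (10)---is exactly the natural way to establish such a list and is the strategy Smith himself uses; in that sense your proposal is correct and aligned with the cited source rather than with anything the present paper supplies.

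Two small remarks on the proposal itself. First, in your treatment of $(c,d)(1,0)$ you write ``substituting $b=0$ on the right of the full formula''; in the notation of the doubling rule $(a,b)(c,d)$ it is the second coordinate of the \emph{right} factor that vanishes, i.e.\ $d=0$, not $b$. The computation you intend still goes through. Second, your appeal to left cancellation (10) for the \emph{uniqueness} of the inverse in (8) is slightly circular as ordered, since (10) is itself one of the clauses being established; uniqueness of a left inverse is more cleanly obtained from left distributivity (7) together with norm multiplicativity (4), which make left multiplication by a nonzero element injective. Neither point affects the overall soundness of your plan.
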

Note the one-sidedness of many of the above properties. In particular, the 16-ons form a division left semialgebra, since all non-zero elements have an inverse but they lack the full distributivity properties required to be an algebra.\\
\indent We define $\mathcal{A}+\mathcal{A}$, $\mathcal{A}\mathcal{A}$, $\mathcal{A}\mathcal{A}^{-1}$, $E(\mathcal{A})$, and $E'(\mathcal{A})$ for finite subsets of the 16-ons identically as in the octonionic case. We denote the 16-ons by $\mathbb{S}$. The lemma bounding $E'(\mathcal{A})$ when $\mathcal{A}\subset \mathbb{O}$ adapts to the case of $\mathcal{A}\subset\mathbb{S}$ immediately, and we state it below.
\begin{lemma}
Let $\mathcal{A}\subset\mathbb{S}$ and $0\not\in\mathcal{A}$. Let $e_1,...,e_{|\mathcal{A}/\mathcal{A}|}$ be the elements of $\mathcal{A}/\mathcal{A}$, $a_i$ be the number of ways of multiplying an element of $\mathcal{A}$ with an element of $\mathcal{A}^{-1}$ (in that order) to obtain $e_i$, and $b_i$ be the number of ways of multiplying an element of $\mathcal{A}^{-1}$ with an element of $\mathcal{A}$ (in that order) to obtain $e_i$, for $i=1,...,|\mathcal{A}/\mathcal{A}|$. Suppose the $a_i$ and $b_i$ are such that $\sum_i a_ib_i\geq \operatorname{min}(\sum_i a_i^2,\sum_i b_i^2)$. Then $$E^\prime(\mathcal{A})\geq \frac{|\mathcal{A}|^4|\mathcal{A}/\mathcal{A}|}{\operatorname{min}\left(|\mathcal{A}\mathcal{A}^{-1}|,|\mathcal{A}^{-1}\mathcal{A}|\right)^2}.$$ When $\mathcal{A}$ is closed under taking inverses, $E^\prime(\mathcal{A})\geq \frac{|\mathcal{A}|^4}{|\mathcal{A}\mathcal{A}|}$.
\end{lemma}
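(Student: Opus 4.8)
The plan is to transcribe the proof of the octonion case, replacing $\mathbb{O}$ by $\mathbb{S}$ throughout. What makes this legitimate is that the octonionic proof never used any property peculiar to $\mathbb{O}$: it invoked only (i) the existence of a unique two-sided inverse $x^{-1}$ for every non-zero element of the ambient space, so that $\mathcal{A}^{-1}$, $\mathcal{A}\mathcal{A}^{-1}$, $\mathcal{A}^{-1}\mathcal{A}$, $\mathcal{A}/\mathcal{A}$, the counts $a_i$, $b_i$, and $E'(\mathcal{A})$ are all well-defined; and (ii) a purely combinatorial Cauchy--Schwarz estimate on the nonnegative integer tuples $(a_i)$ and $(b_i)$. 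Ingredient (i) holds in $\mathbb{S}$ by item~(8) of the Proposition, which gives $x^{-1}=\bar x\|x\|^{-2}$ with $x^{-1}x=xx^{-1}=1$; the quantities $E'(\mathcal{A})$, $\mathcal{A}/\mathcal{A}$, $\mathcal{A}\mathcal{A}^{-1}$ and so on were defined for $\mathbb{S}$ in the paragraph preceding the statement; and ingredient (ii) makes no reference to the ambient structure at all.

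Concretely I would argue as follows. Assume without loss of generality that $\min(\sum_i a_i^2,\sum_i b_i^2)=\sum_i a_i^2$ and put $m:=|\mathcal{A}/\mathcal{A}|$. First record the two identities used in the octonion case: the double-counting identity $\sum_{i=1}^m a_i = |\mathcal{A}|^2|\mathcal{A}/\mathcal{A}|/|\mathcal{A}\mathcal{A}^{-1}|$, and $E'(\mathcal{A})=\sum_{i=1}^m a_i b_i$ (valid because a pair of equations $ab^{-1}=e_i=c^{-1}d$ forces the common value to lie in $\mathcal{A}\mathcal{A}^{-1}\cap\mathcal{A}^{-1}\mathcal{A}=\mathcal{A}/\mathcal{A}$, so no solutions are lost by indexing over $\mathcal{A}/\mathcal{A}$). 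The hypothesis then gives $E'(\mathcal{A})=\sum_i a_i b_i\geq\sum_i a_i^2$, and Cauchy--Schwarz applied to $(a_1,\dots,a_m)$ and $(1/\sqrt m,\dots,1/\sqrt m)$ gives $\sum_i a_i^2\geq(\sum_i a_i)^2/m$. Substituting the first identity and simplifying yields $E'(\mathcal{A})\geq|\mathcal{A}|^4|\mathcal{A}/\mathcal{A}|/|\mathcal{A}\mathcal{A}^{-1}|^2$; since the roles of $a_i$ and $b_i$ were interchangeable, the denominator may be replaced by $\min(|\mathcal{A}\mathcal{A}^{-1}|,|\mathcal{A}^{-1}\mathcal{A}|)^2$, which is the first assertion. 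For the closed-under-inversion case, $\mathcal{A}=\mathcal{A}^{-1}$ forces $\mathcal{A}\mathcal{A}^{-1}=\mathcal{A}^{-1}\mathcal{A}=\mathcal{A}\mathcal{A}$ and hence $\mathcal{A}/\mathcal{A}=\mathcal{A}\mathcal{A}$, and substituting $|\mathcal{A}/\mathcal{A}|=|\mathcal{A}\mathcal{A}|$ and $\min(\cdots)=|\mathcal{A}\mathcal{A}|$ collapses the inequality to $E'(\mathcal{A})\geq|\mathcal{A}|^4/|\mathcal{A}\mathcal{A}|$.

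I do not anticipate a genuine obstacle, so the only thing to verify carefully is that no step secretly relied on associativity or on full two-sided distributivity of $\mathbb{O}$. It does not: every quantity appearing is the cardinality of the solution set of an equation of the form $xy=e_i$ with $x\in\mathcal{A}$ and $y\in\mathcal{A}^{-1}$ (or its mirror image), which is perfectly meaningful in the left semialgebra $\mathbb{S}$, and the optimisation step is an inequality between real vectors. Hence the ``immediately'' in the surrounding text is accurate, and the proof is a line-for-line copy of the octonionic one with $\mathbb{O}$ replaced by $\mathbb{S}$.
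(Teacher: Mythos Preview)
Your proposal is correct and matches the paper's approach exactly: the paper gives no separate proof for the $\mathbb{S}$ version but simply asserts that the octonionic argument ``adapts to the case of $\mathcal{A}\subset\mathbb{S}$ immediately,'' and your write-up is precisely that line-for-line transcription, together with the (welcome) explicit check that only the existence of inverses and a Cauchy--Schwarz step are used.
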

%We call a subset $\mathcal{A}\subset\mathbb{S}$ \textit{pairwise generic} if any pair of elements in $\mathcal{A}$ is generic, that is, does not satisfy a multivariate equation with integral coefficients. This is necessary for division in the 16-ons to yield unique and existent solutions \cite[Theorem 70]{Smith16ons}.
\subsubsection{Niners}
The subset of the 16-ons comprised of elements with the last seven entries equal to zero form a useful subset of the 16-ons. This is so because they are the largest additively closed subset to satisfy both left and right distributivity, and they are also closed under inversion (not, however, multiplication). Both left and right distributivity appear crucial to the following argument. We prove this basic property below.
\begin{proposition}\label{ninerdistr}
   Let $x,y,z\in\mathbb{S}\setminus0$ and $x$ be a niner. Then $(y+z)x = yx+zx$.
\end{proposition}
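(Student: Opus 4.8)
The plan is to reduce right distributivity by a niner to the observation that, for a niner $x$, the map $y\mapsto yx$ has a closed form that is visibly additive in the coordinates of $y$. First I would fix coordinates: write $y=(a,b)$ and $z=(c,d)$ with $a,b,c,d\in\mathbb{O}$, and write the niner as $x=(p,\lambda)$ with $p\in\mathbb{O}$ and $\lambda\in\mathbb{R}$ --- the niner hypothesis says precisely that the second octonionic component of $x$ has vanishing imaginary part, i.e.\ is a real scalar.

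The main step is to show that for \emph{every} $16$-on $(a,b)$ and every niner $(p,\lambda)$,
$$(a,b)(p,\lambda)=\bigl(ap-\lambda\bar b,\; pb+\lambda\bar a\bigr).$$
When $b\neq 0$ the $16$-on multiplication rule produces second component $\bar a b^{-1}\cdot b\lambda$; since $\lambda$ is a real scalar it factors out of the octonionic product, leaving $\lambda\bigl((\bar a b^{-1})b\bigr)$, and $(\bar a b^{-1})b=\bar a(b^{-1}b)=\bar a$ because $\mathbb{O}$ is alternative (the subalgebra generated by $\bar a$ and $b$ is associative by Artin's theorem and contains $b^{-1}\in\mathbb{R}+\mathbb{R}b$; equivalently one invokes the inverse property of alternative division algebras). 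When $b=0$ the special rule $(a,0)(p,\lambda)=(ap,\bar a\lambda)$ already has this form, since then $\bar b=0$. This single formula also absorbs the degenerate cases $b=0$, $d=0$ and $b+d=0$, so no separate treatment of them is needed.

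With the formula in hand the proposition is immediate: multiplication in $\mathbb{O}$ is additive in each argument and conjugation on $\mathbb{O}$ is additive, while $\lambda$ is held fixed, so each component of
$$(y+z)x=\bigl((a+c)p-\lambda\overline{b+d},\; p(b+d)+\lambda\overline{a+c}\bigr)$$
splits as the sum of the corresponding components of $yx$ and of $zx$, i.e.\ $(y+z)x=yx+zx$.

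I expect the only genuine content to be the octonionic identity $(\bar a b^{-1})b=\bar a$ --- that is, correctly parsing the parenthesisation of $\bar a b^{-1}\cdot bd$ in Smith's multiplication rule and appealing to alternativity of $\mathbb{O}$; once that is pinned down, everything else is coordinatewise additivity, and the hypothesis $x,y,z\neq 0$ is used only to make the multiplication rule and inverses meaningful (the identity being trivial if any of them is $0$).
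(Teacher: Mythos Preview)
Your proof is correct and takes essentially the same route as the paper: both expand the $16$-on multiplication explicitly, use that the second octonionic component of the niner is a real scalar to pull it through, and invoke the octonionic inverse property $(\bar a b^{-1})b=\bar a$ to collapse the awkward second-slot term; the paper simply expands $(y+z)x$ and $yx+zx$ separately and matches coordinates, whereas you first isolate the closed form $(a,b)(p,\lambda)=(ap-\lambda\bar b,\,pb+\lambda\bar a)$ and then read off additivity. Your version is in fact slightly more complete, since the closed form absorbs the degenerate cases $b=0$, $d=0$, $b+d=0$, whereas the paper's computation writes $y_2^{-1}$, $z_2^{-1}$, and $(y_2+z_2)^{-1}$ without comment.
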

\begin{proof}
    Write $x = (x_1,x_2)$ with $x_i\in\mathbb{O}$, $i=1,2$, and similary for $y$ and $z$. We have $x_2\in\mathbb{R}$. Then 
    \begin{align*} (y+z)x &= (y_1+z_1,y_2+z_2)(x_1,x_2)\\ &= \left((y_1+z_1) x_1-x_2 (\overline{y_2+z_2}), x_1 (y_2+z_2)+(\overline{y_1+z_1}) (y_2+z_2)^{-1} \cdot (y_2+z_2) x_2\right)\\
    &= \left(y_1x_1+z_1x_1-x_2\overline{y_2}-x_2\overline{z_2}, x_1y_2+x_1z_2+(\overline{y_1}+\overline{z_1}) x_2\right).
    \end{align*}
    Moreover, we have 
    \begin{align*}
        yx+zx &= (y_1,y_2)(x_1,x_2)+(z_1,z_2)(x_1,x_2)\\ &= \left(y_1 x_1-x_2 \overline{y_2}, x_1 y_2+\overline{y_1} y_2^{-1} \cdot y_2 x_2\right)+\left(z_1 x_1-x_2 \overline{z_2}, x_1 z_2+\overline{z_1} z_2^{-1} \cdot z_2 x_2\right)\\
        &= \left(y_1 x_1-x_2 \overline{y_2}+z_1 x_1-x_2 \overline{z_2}, x_1 y_2+\overline{y_1} y_2^{-1} \cdot y_2 x_2+x_1 z_2+\overline{z_1} z_2^{-1} \cdot z_2 x_2\right)\\
        &= \left(y_1 x_1-x_2 \overline{y_2}+z_1 x_1-x_2 \overline{z_2}, x_1 y_2+\overline{y_1}x_2+x_1 z_2+\overline{z_1}x_2\right).
    \end{align*}
\end{proof}
%prove some results about niners, for completion
\section{Obtaining the Bound for the Octonions}
We follow the method of \cite{kissingsumprods}, in order to derive a sum-product bound for the octonions; the main technical difficulty is re-working the proofs therein to get around the restrictions imposed by nonassociativity.
To obtain the desired bound, it suffices to prove $$\frac{E^\prime(\mathcal{A})}{\log \lceil|\mathcal{A}|\rceil} \ll|\mathcal{A}+\mathcal{A}|^{2}.$$
For $x\in \mathcal{A}/\mathcal{A}$, let $\ell(x)$ ($r(x)$ respectively) denote the number of representatives of $x$ in $\mathcal{A}\mathcal{A}^{-1}$ ($\mathcal{A}^{-1}\mathcal{A}$ respectively).
%need to adjust this definition: doesnt work for nonassociatives!
We can then express ${E}^\prime(\mathcal{A})$ in terms of $\ell(x)$ and $r(x)$:
$$
E^\prime(\mathcal{A})=\sum_{x \in \mathcal{A} / \mathcal{A}} \ell(x) r(x)
$$
Without loss of generality, assume that
$$
\sum_{\substack{x \in \mathcal{A} / \mathcal{A} \\ \ell(x) \geq r(x)}} \ell(x) r(x) \geq \sum_{\substack{x \in \mathcal{A} / \mathcal{A} \\ \ell(x) \leq r(x)}} \ell(x) r(x).
$$
Then we can bound $E^\prime(\mathcal{A})$ as follows:
$$
E^\prime(\mathcal{A})=\sum_{x \in \mathcal{A} / \mathcal{A}} \ell(x) r(x) \leq 2 \sum_{\substack{x \in \mathcal{A} / \mathcal{A} \\ \ell(x) \geq r(x)}} \ell(x) r(x) \leq 2 \sum_{\substack{x \in \mathcal{A} / \mathcal{A} \\ \ell(x) \geq r(x)}} \ell(x)^{2} .
$$
Since $1 \leq \ell(x) \leq|\mathcal{A}|$, there exists an index $I$ such that elements of the set $\mathcal{R}:=\left\{x \in \mathcal{A} / \mathcal{A}: \ell(x) \geq r(x)\right.$ and $\left.2^{I} \leq \ell(x)<2^{I+1}\right\}$ contribute at least $1 /\left\lceil\log _{2}|\mathcal{A}|\right\rceil$ of the above sum. That is,
$$
\frac{E^\prime(\mathcal{A})}{2\lceil\log |\mathcal{A}|\rceil} \leq \sum_{x \in \mathcal{R}} \ell(x)^{2}<|\mathcal{R}| 2^{2 I+2}.
$$
For each $x \in \mathcal{R}$, we define a set $\mathcal{S}_{x}$ by
$$
\mathcal{S}_{x}=\left\{(a+c, b+d) \in(\mathcal{A}+\mathcal{A}) \times(\mathcal{A}+\mathcal{A}): a b^{-1}=x \text { and } c d^{-1}=\phi(x)\right\},
$$
where $\phi: \mathcal{R} \rightarrow \mathcal{R}$ maps an element of $\mathcal{R}$ to the closest (distinct) element in $\mathcal{R}$, that is
$$
\|x-\phi(x)\| \leq\|x-y\|
$$
for all $x, y \in \mathcal{R}$. We show that $\bigcup_{s \in \mathcal{R}} \mathcal{S}_{x}$ has sufficiently large size in $(\mathcal{A}+\mathcal{A}) \times(\mathcal{A}+\mathcal{A})$ (i.e. comprises a proportion larger than $E(\mathcal{A}) /\lceil\log |\mathcal{A}|\rceil$, multiplied by some constant factor).\\
\indent In the proofs of the next two lemmas, we use the fact that while the octonions are not associative, they satisfy weaker identities than associativity. In particular, if $x,y\in\mathbb{O}$, then $y = (yx)x^{-1} = x^{-1}(xy)$ \cite{conwayoctsquats}. The following lemma guarantees that
$$
\left|\mathcal{S}_{x}\right| \geq \ell(x) \ell(\phi(x)) \geq 2^{2 I}.
$$
\begin{lemma}
Given $p,q\in\mathcal{A}+\mathcal{A}$ and distinct $x,y\in\mathcal{A}\mathcal{A}^{-1}$, there is at most one quadruple $(a,b,c,d)\in\mathcal{A}^4$ such that $$a+c = p,\quad b+d = q,\quad ab^{-1} = x,\quad cd^{-1}=y.$$
\end{lemma}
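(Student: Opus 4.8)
The plan is to prove uniqueness directly: assume two quadruples $(a,b,c,d),(a',b',c',d')\in\mathcal{A}^4$ both satisfy $a+c=p=a'+c'$, $b+d=q=b'+d'$, $ab^{-1}=x=a'b'^{-1}$, and $cd^{-1}=y=c'd'^{-1}$, and deduce that they are equal.

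First I would eliminate the "numerators''. Multiplying $ab^{-1}=x$ on the right by $b$ and invoking the octonion identity $(uv)v^{-1}=u$ (applied with $u=a$ and $v=b^{-1}$, using $(b^{-1})^{-1}=b$) yields $a=xb$; likewise $a'=xb'$, $c=yd$, and $c'=yd'$. Substituting these into $a+c=a'+c'$ and using the (two-sided) distributivity of $\mathbb{O}$ gives $x(b-b')=y(d'-d)$.

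Next I would bring in the constraint on $q$. Subtracting $b'+d'=q$ from $b+d=q$ gives $b-b'=d'-d=:t$, so the previous identity becomes $xt=yt$, hence $(x-y)t=0$. Since $\mathbb{O}$ is a division algebra it has no zero-divisors, and $x\neq y$ forces $t=0$. Therefore $b=b'$ and $d=d'$, whence $a=xb=xb'=a'$ and $c=yd=yd'=c'$, so the two quadruples coincide.

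The only place nonassociativity could interfere is the cancellation $(ab^{-1})b=a$, but this is precisely the weak identity $(uv)v^{-1}=u$ quoted in the text (a consequence of alternativity in $\mathbb{O}$); every other step is ordinary left/right distributivity, which holds in $\mathbb{O}$, and no associativity of triple products is needed. So I expect no real obstacle and a short write-up. It is worth flagging that the argument leans on full distributivity, which is why the later $16$-on analogue over $\mathbb{S}$ (where right distributivity fails) will be forced to restrict to niners via Proposition~\ref{ninerdistr}.
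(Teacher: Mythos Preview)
Your argument is correct. The paper's own proof takes the constructive route---it solves explicitly for $d=(y-x)^{-1}(p-xq)$, then $b=q-d$, $c=yd$, $a=xb$---whereas you instead assume two solutions and subtract to force $(x-y)t=0$; but both arguments rest on exactly the same ingredients (the cancellation $(uv)v^{-1}=u$, two-sided distributivity in $\mathbb{O}$, and the absence of zero-divisors), so this is essentially the same proof in a different dress.
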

\begin{proof}
Since $x$ and $y$ are distinct, their difference is invertible. Then solving the equations $$d = (y-x)^{-1}(p-xq),\quad b = q-d,\quad c = yd,\quad a = xb$$ yields a unique tuple $(a,b,c,d)$, if such a solution exists.
\end{proof}

\begin{lemma}
Let $a,b,c$ and $d$ be octonions, and $b$ and $d$ be non-zero and lie in the same hexapentacontadictant. Then $$\|(a+c)(b+d)^{-1}-ab^{-1}\|\leq\|cd^{-1}-ab^{-1}\|.$$
\end{lemma}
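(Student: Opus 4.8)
The plan is to reduce the claimed inequality to the elementary estimate $\|b+d\|\ge\|d\|$ for $b,d$ in the same hexapentacontadictant (recorded in the preliminaries), by writing the ``mediant'' $(a+c)(b+d)^{-1}$ explicitly in terms of the ratios $x:=ab^{-1}$ and $y:=cd^{-1}$. First I would observe that $b+d\ne 0$ (a sum of two nonzero octonions with a common sign pattern), so $(b+d)^{-1}$ exists and the left-hand side is well defined. Using $a=(ab^{-1})b=xb$ and $c=(cd^{-1})d=yd$, we have $a+c=xb+yd$. The key identity is
$$ x \;=\; (xb+xd)(b+d)^{-1}, $$
which holds because $xb+xd=x(b+d)$ by left distributivity, and then $(x(b+d))(b+d)^{-1}=x$ by the octonionic identity $(zw)w^{-1}=z$ (the identity $y=(yx)x^{-1}$ cited in the excerpt), applied with $z=x$ and $w=b+d\ne 0$.

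Subtracting this from $(a+c)(b+d)^{-1}=(xb+yd)(b+d)^{-1}$ and using right distributivity together with $yd-xd=(y-x)d$ gives
$$ (a+c)(b+d)^{-1}-ab^{-1} \;=\; \big((y-x)d\big)(b+d)^{-1}. $$
Taking norms and using the multiplicativity of the octonionic norm, the right-hand side has norm $\|y-x\|\,\|d\|\,\|b+d\|^{-1}$. Since $b$ and $d$ lie in the same hexapentacontadictant, $\|b+d\|\ge\|d\|$, so $\|d\|\,\|b+d\|^{-1}\le 1$, and therefore
$$ \big\|(a+c)(b+d)^{-1}-ab^{-1}\big\| \;\le\; \|y-x\| \;=\; \big\|cd^{-1}-ab^{-1}\big\|, $$
which is the assertion.

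The one place that needs care — and the step I would treat most cautiously — is ensuring every manipulation is legal in the nonassociative setting: one may use only two-sided distributivity, multiplicativity of the norm, and the inverse identity $(zw)w^{-1}=z$, and in particular must avoid ``cancelling'' $(b+d)^{-1}$ in any way that secretly reassociates a triple product. In the argument above no triple product is ever reassociated — each step is a single distributive law or a single application of $(zw)w^{-1}=z$ — so the computation goes through, and verifying exactly this is the substance of the lemma.
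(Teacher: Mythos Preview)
Your proof is correct and follows essentially the same route as the paper's: both arguments reduce the inequality to the identity
\[
(a+c)(b+d)^{-1}-ab^{-1}=\bigl((cd^{-1}-ab^{-1})d\bigr)(b+d)^{-1},
\]
obtained using only distributivity and the inverse law $(zw)w^{-1}=z$, then take norms and apply $\|d\|\le\|b+d\|$. The only cosmetic difference is that the paper right-multiplies by $b+d$ inside the norm at the outset, whereas you keep the factor $(b+d)^{-1}$ on the right and combine via right distributivity; the algebraic content is identical.
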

\begin{proof}
We have \begin{align*}
    \|(a+c)(b+d)^{-1}-ab^{-1}\| &= \|a+c-(ab^{-1})(b+d)\|\frac{1}{\|b+d\|}\\
    &= \|a+c - (ab^{-1})b-(ab^{-1})d\|\frac{1}{\|b+d\|}\\ &= \|a+c-a-(ab^{-1})d\|\frac{1}{\|b+d\|}\\ &= \|cd^{-1}-ab^{-1}\|\frac{\|d\|}{\|b+d\|}\\ &\leq \|cd^{-1}-ab^{-1}\|.
\end{align*}
\end{proof}
\begin{lemma}
Let $P\in\mathbb{R}^8$ be contained in $m$ closed balls, denoted $\mathcal{B}_i$, each with center $Q_i\in\mathbb{R}^8$, for $i=1,...,m$. Suppose $Q_i$ is not contained in the interior of any $\mathcal{B}_j$, for $j\ne i$. Then $m\leq 241$.
\end{lemma}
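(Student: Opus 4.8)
The plan is to use the hypothesis that $P$ lies in \emph{every} ball as a vantage point from which the centers must look well-separated, and then to invoke the fact that the kissing number in dimension $8$ equals $240$. First I would record the metric content of the hypotheses: writing $r_i \geq 0$ for the radius of $\mathcal{B}_i$, the containment $P \in \mathcal{B}_i$ says $\|P - Q_i\| \leq r_i$, while the statement that $Q_i$ is not in the interior of $\mathcal{B}_j$ (for $j \neq i$) says $\|Q_i - Q_j\| \geq r_j$. A first easy consequence is that distinct balls have distinct centers: if $Q_i = Q_j$ with $i \neq j$ then $r_i = r_j = 0$ and $\mathcal{B}_i = \mathcal{B}_j = \{Q_i\}$ coincide. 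In particular at most one index $i_0$ has $Q_{i_0} = P$; I set that index aside and work with the (at least $m-1$) indices $i$ for which $Q_i \neq P$, noting that for these $r_i > 0$.

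The geometric heart is the claim that for two such indices $i \neq j$ the angle $\angle Q_i P Q_j$ is at least $60^\circ$. Relabelling so that $\|P - Q_i\| \leq \|P - Q_j\|$, in the triangle with vertices $P, Q_i, Q_j$ we have $\|Q_i - Q_j\| \geq r_j \geq \|P - Q_j\| \geq \|P - Q_i\|$, using the inequalities above; hence $Q_i Q_j$ is a longest side, so the angle opposite it — the angle at $P$ — is a largest angle of the triangle and therefore at least $\tfrac{1}{3}\cdot 180^\circ = 60^\circ$. Consequently the unit vectors $u_i := (Q_i - P)/\|Q_i - P\|$ (defined precisely because $Q_i \neq P$) satisfy $\langle u_i, u_j\rangle \leq \cos 60^\circ = \tfrac12$ for all such $i \neq j$, and in particular are pairwise distinct.

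To finish I would translate this into a kissing configuration in $\mathbb{R}^8$: place a closed unit ball centred at $2u_i$ for each relevant $i$. Then $\|2u_i - 2u_j\|^2 = 8(1 - \langle u_i, u_j\rangle) \geq 4$, so these unit balls have pairwise disjoint interiors, and each is externally tangent to the unit ball centred at the origin since $\|2u_i\| = 2$. Hence their number is at most the kissing number $\tau_8 = 240$; adding back the at-most-one index $i_0$ with $Q_{i_0} = P$ gives $m \leq 241$.

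I expect the only real subtlety to be the bookkeeping around degenerate configurations — balls of radius zero, the possible ball centred exactly at $P$, and making sure the vectors $u_i$ are genuinely distinct — all of which are dispatched by the distinctness-of-centres observation. The core inequality $\angle Q_i P Q_j \geq 60^\circ$ is then the short triangle argument above, and the passage from "unit vectors with pairwise inner product $\leq \tfrac12$" to the kissing number is the standard correspondence; the one external input is the theorem that $\tau_8 = 240$.
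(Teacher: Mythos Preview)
Your argument is correct and is essentially the same as the paper's: both reduce the configuration to a kissing arrangement of $m-1$ congruent spheres around a central sphere in $\mathbb{R}^8$ and invoke $k_8=240$. The paper merely cites \cite[Lemma~8]{kissingsumprods} for this reduction, whereas you spell out the triangle/angle argument and the handling of a possible centre at $P$ explicitly; the content is the same.
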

\begin{proof}
In the same manner as the proof of \cite[Lemma 8]{kissingsumprods}, one can construct $m-1$ non-overlapping spheres of equal radius, all touching some central sphere. Since the maximum number of spheres that can be arranged in this configuration in dimension 8 is 240 (the so-called kissing number), we arrive at the result.
\end{proof}
Combining the above results, we have 
$$|\mathcal{A}+\mathcal{A}|^{2}\geq\left|\bigcup_{x \in \mathcal{R}} \mathcal{S}_{x}\right| \geq \frac{1}{241} \sum_{x \in \mathcal{R}}\left|\mathcal{S}_{x}\right| \geq \frac{1}{241}|\mathcal{R}| 2^{2 I} \geq \frac{E^{\prime}(\mathcal{A})}{1928\lceil\log |\mathcal{A}|\rceil},$$
which implies the required bound.
%use to get result for hermitian 3 by 3 matrices over O? analogous to solymosi wong
\section{Obtaining a Bound for the 16-ons}
In this section we adapt the analysis carried out for the octonions for niner subsets of the 16-ons. The weaker properties satisfied by the elements of $\mathbb{S}$ introduce some subtleties into the proofs and force the restriction to sets of niners (which satisfy good distributive identities), but the argument proceeds largely as before. Having completed this, we rearrange the final inequalities to get a result lower bounding $k_{16}$.
\subsubsection{Hexatriacontapentactapentaliahexacismyriants} We will need to divide the 16-dimensional plane into parts depending on the coefficients of a given 16-on. Instead of using increasingly lengthy numerical prefixes, we will simply refer to the portions of the plane containing 16-ons all of whose coordinates in the vector representation in $\mathbb{R}^{16}$ are positive, or all negative, as the \textit{privileged orthants}. That is, for a 16-on $x=\alpha + \beta i + \gamma j +...\in \mathbb{S}$, we can write $x=(\alpha,\beta,\gamma,...)\in\mathbb{R}^{16}$, and the privileged orthants contain the 16-ons with vectors whose entries are all positive, or all negative.\\

\noindent To obtain the bound for the 16-ons, we restrict to subsets $\mathcal{A}$ of niners, and it again suffices to prove $$\frac{E(\mathcal{A})}{\log \lceil|\mathcal{A}|\rceil} \ll|\mathcal{A}+\mathcal{A}|^{2}.$$
We define $\ell(x)$ and $r(x)$ as before and express ${E}^\prime(\mathcal{A})$ as
$$
E^\prime(\mathcal{A})=\sum_{x \in \mathcal{A} / \mathcal{A}} \ell(x) r(x),
$$
and assume that
$$
\sum_{\substack{x \in \mathcal{A} / \mathcal{A} \\ \ell(x) \geq r(x)}} \ell(x) r(x) \geq \sum_{\substack{x \in \mathcal{A} / \mathcal{A} \\ \ell(x) \leq r(x)}} \ell(x) r(x).
$$
The bound on $E^\prime(\mathcal{A})$
$$
E^\prime(\mathcal{A})=\sum_{x \in \mathcal{A} / \mathcal{A}} \ell(x) r(x) \leq 2 \sum_{\substack{x \in \mathcal{A} / \mathcal{A} \\ \ell(x) \geq r(x)}} \ell(x) r(x) \leq 2 \sum_{\substack{x \in \mathcal{A} / \mathcal{A} \\ \ell(x) \geq r(x)}} \ell(x)^{2}
$$
follows. Since $1 \leq \ell(x) \leq|\mathcal{A}|$, there is an index $I$ such that elements of $\mathcal{R}:=\left\{x \in \mathcal{A} / \mathcal{A}: \ell(x) \geq r(x)\right.$ and $\left.2^{I} \leq \ell(x)<2^{I+1}\right\}$ contribute at least $1 /\left\lceil\log _{2}|\mathcal{A}|\right\rceil$ of the above sum. Thus
$$
\frac{E^\prime(\mathcal{A})}{2\lceil\log |\mathcal{A}|\rceil} \leq \sum_{x \in \mathcal{R}} \ell(x)^{2}<|\mathcal{R}| 2^{2 I+2}.
$$
For $x \in \mathcal{R}$, we define $\mathcal{S}_{x}$ by
$$
\mathcal{S}_{x}=\left\{(a+c, b+d) \in(\mathcal{A}+\mathcal{A}) \times(\mathcal{A}+\mathcal{A}): b^{-1}a=x \text { and } d^{-1}c=\phi(x)\right\},
$$
where $\phi: \mathcal{R} \rightarrow \mathcal{R}$ is a map sending an element to a closest element in $\mathcal{R}$, i.e.
$$
\|x-\phi(x)\| \leq\|x-y\|
$$
for all $x, y \in \mathcal{R}$. We show that $\bigcup_{s \in \mathcal{R}} \mathcal{S}_{x}\subset (\mathcal{A}+\mathcal{A}) \times(\mathcal{A}+\mathcal{A})$ has size $\gg E(\mathcal{A}) /\lceil\log |\mathcal{A}|\rceil$.\\
\indent In the proofs of the next two lemmas, we use the fact that while the 16-ons are not associative, they satisfy weaker identities than associativity. In particular, we have $x^{-1}(xy)=y$. Moreover, left multiplication of sedenions is a linear operation, i.e. $a(b+c) = ab+ac$ for $a,b,c\in\mathbb{S}$. However, we note that for arbitrary 16-ons, right multiplication is not linear. This is the reason we restrict to subsets of niners; by Proposition \ref{ninerdistr}, the equation $(u+v)w = uw+vw$ holds when $u,v\in\mathbb{S}$ and $w$ is a niner.  The following lemma implies
$$
\left|\mathcal{S}_{x}\right| \geq \ell(x) \ell(\phi(x)) \geq 2^{2 I}.
$$
\begin{lemma}
Given a finite set $\mathcal{A}$ of niners, $p,q\in\mathcal{A}+\mathcal{A}$ and distinct $x,y\in\mathcal{A}^{-1}\mathcal{A}$, there is at most one quadruple $(a,b,c,d)\in\mathcal{A}^4$ such that $$a+c = p,\quad b+d = q,\quad b^{-1}a = x,\quad d^{-1}c=y.$$
\end{lemma}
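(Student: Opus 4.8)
The plan is to follow the proof of the analogous octonionic counting lemma above, transporting every right-sided manipulation used there (those relied on alternativity of $\mathbb{O}$) to a left-sided one valid in $\mathbb{S}$, and invoking the niner hypothesis exactly where a product must be split across a sum. First I would use left cancellation: from $b^{-1}a = x$ we get $a = b(b^{-1}a) = bx$, and likewise $c = dy$. Hence any solution quadruple is determined by the pair of niners $(b,d)$ with $b+d = q$ and $bx + dy = p$.

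Next I would eliminate one unknown. Substituting $b = q-d$ (a difference of niners, hence again a niner) into $bx+dy = p$ gives $(q-d)x + dy = p$. Since $q-d$ is a niner, the product $(q-d)x$ distributes as $qx - dx$; this is the step forcing the restriction to niners, and it is established exactly as in Proposition \ref{ninerdistr} — one splits the multiplication rule, the only change being which factor one asks to be a niner, and one checks directly that right multiplication by any fixed 16-on is additive on niner arguments. Then $-dx+dy = d(y-x)$ by left distributivity, so the four equations collapse to the single equation
$$ d(y-x) = p - qx. $$

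Finally I would extract $d$. As $x \neq y$, the element $y-x$ is nonzero and therefore invertible. Over $\mathbb{O}$ one would now simply right-multiply by $(y-x)^{-1}$; in $\mathbb{S}$ there is no right cancellation, so instead I would note that right multiplication by $y-x$ is injective on niners: if $d(y-x) = d'(y-x)$ for niners $d,d'$, then $(d-d')(y-x) = 0$ by the distributivity just established, whence $\|d-d'\|\,\|y-x\| = \|(d-d')(y-x)\| = 0$ by multiplicativity of the norm, so $d = d'$. Thus the displayed equation has at most one niner solution $d$, and then $b = q-d$, $c = dy$ and $a = bx$ are all determined, giving at most one quadruple.

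The main obstacle is this last step: the absence of associativity and of any right cancellation in $\mathbb{S}$ means one cannot solve $d(y-x) = p-qx$ by dividing on the right, so uniqueness must be squeezed out of the injectivity of right multiplication on the niner subspace, which is ultimately what the multiplicative norm provides. Beyond that, the only care needed is bookkeeping — verifying that the intermediate elements $q$, $d$, $q-d$ remain niners so the distributive step is legitimate, and that $(y-x)^{-1}$ exists, which it does since $x \neq y$.
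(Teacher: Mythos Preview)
Your argument is correct and follows the same elimination strategy as the paper's proof, but you are more careful on two points that the paper glosses over. First, the distributive identity actually needed is $(u+v)w = uw+vw$ for \emph{niner} $u,v$ and arbitrary $w\in\mathbb{S}$ (since $x\in\mathcal{A}^{-1}\mathcal{A}$ need not be a niner); this is the mirror of Proposition~\ref{ninerdistr}, and your direct verification from the multiplication rule is the right justification. Second, the elimination yields $d(y-x)=p-qx$ with the unknown on the \emph{left}, so one cannot simply write $d=(y-x)^{-1}(p-xq)$ as the paper does---that formula would come from left cancellation only if $d$ sat on the right. Your use of the multiplicative norm to show that right multiplication by $y-x$ is injective on niners is exactly what is needed to extract uniqueness of $d$, and it patches a genuine gap in the paper's one-line proof.
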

\begin{proof}
Since $x$ and $y$ are distinct, their difference is invertible. Then solving the equations $$d = (y-x)^{-1}(p-xq),\quad b = q-d,\quad c = dy,\quad a = bx$$ yields a unique tuple $(a,b,c,d)$, if such a solution exists.
\end{proof}

\begin{lemma}\label{ninerballs}
Let $a,b,c$ and $d$ be niners such that $(a+c,b+d)\in\mathcal{S}_x$ for some $x$, and $b$ and $d$ be non-zero and lie in the same privileged orthant. Then $$\|(b+d)^{-1}(a+c)-b^{-1}a\|\leq\|d^{-1}c-b^{-1}a\|.$$
\end{lemma}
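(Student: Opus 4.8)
The plan is to mirror the earlier octonionic lemma bounding $\|(a+c)(b+d)^{-1}-ab^{-1}\|$, but with every inverse and product moved to the left, so that only left distributivity, left cancellation, and multiplicativity of the norm are invoked. Write $e:=b+d$. Since $b$ and $d$ are non-zero and lie in the same privileged orthant, their coordinates all share one fixed sign, so $e$ is again a non-zero niner (hence $e^{-1}$ exists) and $\|e\|^2=\sum_i(b_i+d_i)^2\geq\sum_i d_i^2=\|d\|^2$.

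First I would rewrite the target difference. Using the left-cancellation identity $x^{-1}(xy)=y$ with $x=e$ and $y=b^{-1}a$, together with left distributivity $x(u+v)=xu+xv$ (valid for subtraction too, since scalars pull through products), one gets
$$(b+d)^{-1}(a+c)-b^{-1}a=(b+d)^{-1}\!\left[(a+c)-(b+d)(b^{-1}a)\right].$$
Taking norms and using $\|xy\|=\|x\|\|y\|$ and $\|e^{-1}\|=\|e\|^{-1}$ reduces the claim to the identity $\|(a+c)-(b+d)(b^{-1}a)\|=\|d\|\,\|d^{-1}c-b^{-1}a\|$, since then $\|(b+d)^{-1}(a+c)-b^{-1}a\|=(\|d\|/\|b+d\|)\|d^{-1}c-b^{-1}a\|\leq\|d^{-1}c-b^{-1}a\|$ by the inequality $\|e\|\geq\|d\|$ above.

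To prove that identity I would expand $(b+d)(b^{-1}a)=b(b^{-1}a)+d(b^{-1}a)$, cancel $b(b^{-1}a)=a$ by left cancellation to be left with $c-d(b^{-1}a)$, then write $c=d(d^{-1}c)$ (left cancellation again) and apply left distributivity to obtain $c-d(b^{-1}a)=d(d^{-1}c-b^{-1}a)$; a final use of $\|xy\|=\|x\|\|y\|$ delivers the claimed norm identity. Each of these manipulations parallels the octonionic proof step for step, and I expect them to be routine.

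The step I expect to be the real obstacle is the expansion $(b+d)(b^{-1}a)=b(b^{-1}a)+d(b^{-1}a)$: right distributivity fails for general $16$-ons, and $b^{-1}a$ is \emph{not} in general a niner, so Proposition \ref{ninerdistr} (which handles $(y+z)x$ with $x$ a niner) does not apply here verbatim. What saves it is that $b$ and $d$ are niners: for a niner $(m,r)$ (so $r\in\mathbb{R}$) the multiplication rule collapses, after pulling the scalar $r$ through, to $(m,r)(z_1,z_2)=(mz_1-rz_2,\ rz_1+\bar m z_2)$, which is $\mathbb{R}$-linear in the pair $(m,r)$; hence for any fixed $w\in\mathbb{S}$ the map $N\mapsto Nw$ is additive over niners $N$, so $(b+d)w=bw+dw$. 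This is the same computation that underlies Proposition \ref{ninerdistr}, only with the niner hypothesis imposed on the summands rather than on the multiplier, and it is precisely this need that forces the restriction to niner subsets.
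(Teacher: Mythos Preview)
Your argument is correct and follows the same chain of equalities as the paper's proof: factor out $(b+d)^{-1}$ using left cancellation and multiplicativity of the norm, expand $(b+d)(b^{-1}a)$, cancel $b(b^{-1}a)=a$, rewrite $c-d(b^{-1}a)=d(d^{-1}c-b^{-1}a)$, and finish with $\|d\|/\|b+d\|\le 1$.

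Where you go beyond the paper is in the justification of the expansion $(b+d)(b^{-1}a)=b(b^{-1}a)+d(b^{-1}a)$. The paper invokes Proposition~\ref{ninerdistr} in the surrounding discussion, but that proposition only gives $(y+z)x=yx+zx$ when the \emph{right} factor $x$ is a niner, whereas here $b^{-1}a$ need not be a niner (the product of two niners is generally not a niner, since its second octonionic component is $r_1m_2+r_2\bar m_1$). Your computation that for a niner $(m,r)$ the product $(m,r)(z_1,z_2)=(mz_1-rz_2,\,rz_1+\bar m z_2)$ is $\mathbb{R}$-linear in $(m,r)$ is exactly the missing ingredient, and is the correct form of right distributivity needed here: it is the niner hypothesis on the \emph{left} factors $b,d$ that makes the expansion valid, not a niner hypothesis on $b^{-1}a$. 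So your proof is not merely the paper's argument reproduced, but a tightening of it.
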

\begin{proof}

We have 
\begin{align*}
    \|(b+d)^{-1}(a+c)-b^{-1}a\| &= \|a+c-(b+d)(b^{-1}a)\|\frac{1}{\|b+d\|}\\
    &= \|a+c-b(b^{-1}a)-d(b^{-1}a)\|\frac{1}{\|b+d\|}\\
    &= \|d^{-1}c-b^{-1}a\|\frac{\|d\|}{\|b+d\|}.
\end{align*}
\end{proof}

%Denoting the set of niners by $\mathcal{N}$, note $S_x\subset \mathcal{N}^2$. 
The above lemma says that if $(a+c,b+d)\in S_x$, there is a 16-on $(b+d)^{-1}(a+c)$ lying in the closed ball centered at $x$ of radius $\phi(x)$. The following lemma shows that there cannot be too many 16-ons lying in any given such closed ball.

\begin{lemma}
Let $P\in\mathbb{R}^{16}$ be contained in $m$ closed balls, denoted $\mathcal{B}_i$, each with center $Q_i\in\mathbb{R}^{16}$, for $i=1,...,m$. Suppose $Q_i$ is not contained in the interior of any $\mathcal{B}_j$, for $j\ne i$. Then $m\leq 7333$.
\end{lemma}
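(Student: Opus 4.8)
The plan is to mirror the proof of the eight-dimensional version given above (which itself follows \cite[Lemma 8]{kissingsumprods}): from the hypotheses I would extract a family of unit vectors in $\mathbb{R}^{16}$ whose pairwise angular separation is at least $60^{\circ}$ — that is, a kissing configuration — and then bound the size of such a family by the best available upper bound on the kissing number $k_{16}$. Write $r_i$ for the radius of $\mathcal{B}_i$, and assume throughout that the radii are positive (otherwise the statement is vacuous/false, as infinitely many degenerate balls could share the point $P$).

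Concretely, I would first translate so that $P=0$. At most one centre $Q_i$ can coincide with $P$: if $Q_i=Q_j=P$ for $i\ne j$ then $Q_j\notin\operatorname{int}\mathcal{B}_i$ forces $0=\|Q_i-Q_j\|\ge r_i>0$, a contradiction. Set that exceptional index aside (it contributes at most $1$ to $m$ at the end) and for every remaining $i$ put $u_i:=Q_i/\|Q_i\|\in\mathbb{R}^{16}$, a unit vector. For distinct $i,j$, the containment $P\in\mathcal{B}_i\cap\mathcal{B}_j$ gives $\|Q_i\|\le r_i$ and $\|Q_j\|\le r_j$, while $Q_i\notin\operatorname{int}\mathcal{B}_j$ and $Q_j\notin\operatorname{int}\mathcal{B}_i$ give $\|Q_i-Q_j\|\ge\max(r_i,r_j)\ge\max(\|Q_i\|,\|Q_j\|)$. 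Hence in the triangle with vertices $P,Q_i,Q_j$ the side opposite $P$ is (weakly) longest, so the angle at $P$ is (weakly) the largest of the three angles and therefore at least $60^{\circ}$; equivalently $\langle u_i,u_j\rangle\le\tfrac12$ (the collinear case either forces $Q_i=P$, already excluded, or gives $\langle u_i,u_j\rangle=-1$). In particular the $u_i$ are pairwise distinct.

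I would then observe that $\{u_i\}$ is a set of at least $m-1$ distinct unit vectors in $\mathbb{R}^{16}$ with pairwise inner product at most $\tfrac12$; equivalently, the unit balls centred at the points $2u_i$ are pairwise non-overlapping and each touches the unit ball centred at the origin. This is exactly a kissing arrangement in dimension $16$, so the number of such balls is at most $k_{16}$; using the current best upper bound $k_{16}\le 7332$ we obtain $m-1\le 7332$, i.e. $m\le 7333$.

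The argument is essentially routine once the eight-dimensional case is in hand, so there is no serious obstacle of proof; the only points that need care are the degenerate case $P=Q_i$ (handled above, and responsible for the ``$+1$'') and the observation that $\langle u_i,u_j\rangle\le\tfrac12$ is a \emph{closed} condition, so extremal ``touching'' configurations are permitted — precisely matching the definition of the kissing number. The genuine limitation here is not analytic but arithmetic: $k_{16}$ is not known exactly, so one is forced to substitute an upper bound, and it is exactly this slack that prevents the $k_{16}$ inequality ultimately derived from this lemma from being made numerically effective.
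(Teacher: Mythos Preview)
Your proposal is correct and follows essentially the same route as the paper's own proof, which simply invokes \cite[Lemma~8]{kissingsumprods} to produce $m-1$ non-overlapping equal spheres touching a central sphere and then applies the upper bound $k_{16}\le 7332$. You have merely written out in full the normalization-to-unit-vectors argument that the cited lemma encapsulates, and handled the degenerate case $Q_i=P$ explicitly; nothing substantively different is happening.
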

\begin{proof}
As in the proof of \cite[Lemma 8]{kissingsumprods}, one can construct $m-1$ non-overlapping spheres of equal radius, all touching some central sphere. Since the maximum number of spheres that can be arranged in this configuration in dimension 16 is (currently) bounded between 4320 and 7332 \cite{kissingupperbd}, we arrive at the result.
\end{proof}

\noindent As in the octonionic case, we then obtain
\begin{equation}\label{equation2}
|\mathcal{A}+\mathcal{A}|^{2}\geq\left|\bigcup_{x \in \mathcal{R}} \mathcal{S}_{x}\right| \geq \frac{1}{7333} \sum_{x \in \mathcal{R}}\left|\mathcal{S}_{x}\right| \geq \frac{1}{7333}|\mathcal{R}| 2^{2 I} \gg \frac{E(\mathcal{A})}{\lceil\log |\mathcal{A}|\rceil}.
\end{equation}
\subsubsection{Bounding $k_{16}$}
Note that the inequalities $(\ref{equation2})$ can be rewritten $$|\mathcal{A}+\mathcal{A}|^{2}\geq\left|\bigcup_{x \in \mathcal{R}} \mathcal{S}_{x}\right| \geq \frac{1}{k_{16}+1} \sum_{x \in \mathcal{R}}\left|\mathcal{S}_{x}\right| \geq \frac{1}{k_{16}+1}|\mathcal{R}| 2^{2 I} \gg \frac{E(\mathcal{A})}{(8k_{16}+8)\lceil\log |\mathcal{A}|\rceil}.$$

\noindent Rearranging, we obtain $k_{16}\geq \frac{E(\mathcal{A})}{8\lceil\log |\mathcal{A}|\rceil|\mathcal{A}+\mathcal{A}|^2}-1$, and more tightly, $$k_{16}\geq \frac{\sum_{x \in \mathcal{R}}\left|\mathcal{S}_{x}\right|}{\left|\bigcup_{x \in \mathcal{R}} \mathcal{S}_{x}\right|}-1.$$ 
Therefore by choosing sets $\mathcal{A}\subset \mathbb{S}$ carefully, one may be able to establish concrete lower bounds on $k_{16}$. Stated precisely, we have
\begin{theorem}
    Let $\mathcal{A}\subset\mathbb{S}$ be a finite set of niners such that $0\not\in\mathcal{A}$. Let $\mathcal{R}$ and $S_x$ be defined as above. Then $$k_{16}\geq \frac{\sum_{x \in \mathcal{R}}\left|\mathcal{S}_{x}\right|}{\left|\bigcup_{x \in \mathcal{R}} \mathcal{S}_{x}\right|}-1.$$    
\end{theorem}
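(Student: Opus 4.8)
The plan is to package the uniqueness lemma, Lemma~\ref{ninerballs}, and the ball-counting lemma of this section into a single covering estimate on $(\mathcal{A}+\mathcal{A})\times(\mathcal{A}+\mathcal{A})$, in direct analogy with the octonionic argument, the niner hypothesis being what licenses the distributive manipulations. I would begin by fixing a pair $(p,q)\in\bigcup_{x\in\mathcal{R}}\mathcal{S}_x$ and bounding the number of $x\in\mathcal{R}$ for which $(p,q)\in\mathcal{S}_x$. For each such $x$, the definition of $\mathcal{S}_x$ together with the uniqueness lemma above yields a (unique) quadruple $(a,b,c,d)\in\mathcal{A}^4$ with $a+c=p$, $b+d=q$, $b^{-1}a=x$ and $d^{-1}c=\phi(x)$, and the point
$$P:=(b+d)^{-1}(a+c)=q^{-1}p\in\mathbb{R}^{16}$$
depends only on $(p,q)$, not on $x$ nor on the quadruple.

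Next I would feed this quadruple into Lemma~\ref{ninerballs}: as $a,b,c,d$ are niners, $b,d$ non-zero and lying in a common privileged orthant (see the closing remark), that lemma gives
$$\|P-x\|=\bigl\|(b+d)^{-1}(a+c)-b^{-1}a\bigr\|\le\bigl\|d^{-1}c-b^{-1}a\bigr\|=\|x-\phi(x)\|,$$
so $P$ lies in the closed ball $\mathcal{B}_x\subset\mathbb{R}^{16}$ of centre $x$ and radius $\|x-\phi(x)\|$. Because $\phi(y)$ is a closest point of $\mathcal{R}\setminus\{y\}$ to $y$, for distinct $x,y\in\mathcal{R}$ we have $\|x-y\|\ge\|y-\phi(y)\|$, i.e.\ the centre $x$ does not lie in the interior of $\mathcal{B}_y$. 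Thus the balls $\{\mathcal{B}_x:x\in\mathcal{R},\ (p,q)\in\mathcal{S}_x\}$ all contain the single point $P$ and meet the hypotheses of the ball-counting lemma; its proof exhibits $m-1$ pairwise non-overlapping spheres of equal radius all tangent to a central sphere, so the number $m$ of such balls obeys $m-1\le k_{16}$. Hence $\#\{x\in\mathcal{R}:(p,q)\in\mathcal{S}_x\}\le k_{16}+1$ for every $(p,q)$.

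It then remains to double count the incidences $\{(x,(p,q)):x\in\mathcal{R},\ (p,q)\in\mathcal{S}_x\}$: summing first over $x$ and then over $(p,q)$,
$$\sum_{x\in\mathcal{R}}|\mathcal{S}_x|=\sum_{(p,q)\in\bigcup_{x\in\mathcal{R}}\mathcal{S}_x}\#\{x\in\mathcal{R}:(p,q)\in\mathcal{S}_x\}\le(k_{16}+1)\,\Bigl|\bigcup_{x\in\mathcal{R}}\mathcal{S}_x\Bigr|,$$
and dividing by $\bigl|\bigcup_{x\in\mathcal{R}}\mathcal{S}_x\bigr|$ and rearranging gives the claimed $k_{16}\ge\sum_{x\in\mathcal{R}}|\mathcal{S}_x|\big/\bigl|\bigcup_{x\in\mathcal{R}}\mathcal{S}_x\bigr|-1$. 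Inserting the lower bounds $|\mathcal{S}_x|\ge\ell(x)\ell(\phi(x))\ge2^{2I}$, the inclusion $\bigcup_{x\in\mathcal{R}}\mathcal{S}_x\subset(\mathcal{A}+\mathcal{A})^2$, and the estimate $E'(\mathcal{A})<8\lceil\log|\mathcal{A}|\rceil\,|\mathcal{R}|2^{2I}$ established above recovers the coarser form $k_{16}\ge E'(\mathcal{A})\big/\bigl(8\lceil\log|\mathcal{A}|\rceil|\mathcal{A}+\mathcal{A}|^2\bigr)-1$.

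The step I expect to be the main obstacle is guaranteeing the orthant hypothesis of Lemma~\ref{ninerballs} uniformly over the pairs being counted: the computation there in fact gives $\|P-x\|=\|x-\phi(x)\|\cdot\|d\|/\|b+d\|$, so one needs $\|d\|\le\|b+d\|$ --- equivalently, $b$ and $d$ in a common privileged orthant --- for $P$ to fall inside $\mathcal{B}_x$; this condition, which also appears in the octonionic version, must be read as a standing restriction on $\mathcal{A}$ (or dealt with by passing to the sub-collection of pairs it produces, as in \cite{kissingsumprods}). The remaining delicacy is purely algebraic: one expands $(b+d)^{-1}(a+c)-b^{-1}a$ using left distributivity and left cancellation (both valid throughout $\mathbb{S}$), then uses $(b+d)(b^{-1}a)=b(b^{-1}a)+d(b^{-1}a)$, which holds because the summands $b,d$ are niners (Proposition~\ref{ninerdistr}); and one must interpret the ball-counting lemma as furnishing $m\le k_{16}+1$ for the true kissing number rather than the numerical value $7333$, since it is exactly this reading that turns the chain of inequalities into a lower bound on $k_{16}$.
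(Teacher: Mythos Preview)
Your proposal is correct and follows the paper's argument essentially step for step: bound the multiplicity of each $(p,q)$ in the $\mathcal{S}_x$ via Lemma~\ref{ninerballs} and the ball-counting lemma (read with the true $k_{16}$ in place of $7332$), then double-count --- you in fact spell out the incidence count more explicitly than the paper does, and you correctly flag the privileged-orthant hypothesis as a standing assumption that the paper itself leaves implicit. One small correction to your closing paragraph: Proposition~\ref{ninerdistr} gives $(u+v)w=uw+vw$ when the \emph{right} factor $w$ is a niner, not when the left summands are, so your justification of $(b+d)(b^{-1}a)=b(b^{-1}a)+d(b^{-1}a)$ should appeal to $b^{-1}a$ rather than to $b,d$ --- a point on which the paper is equally silent, given that niners are not closed under multiplication.
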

%check the generic condition in the proof!
\section{Conclusion}
We have obtained sum-product bounds, firstly for subsets of the octonions, and secondly for subsets of niners within the 16-ons, this latter result implying a bound on the kissing number $k_{16}$. The primary outstanding question from this work is whether a concrete lower bound on $k_{16}$ can be obtained by selecting particular sets of niners.
\section{Acknowledgements}
The author would like to thank Edmund Dable-Heath for useful conversations.
%\indent Also note that the product of two niners lies in a subset known as the `twofers', which \cite{Smith16ons} claims forms a 10 dimensional manifold. Speculatively, a geometric argument exploiting this fact may improve the above bound. 
%rewrite whole conclusion
%\section*{Statements and Declarations}
%The author has no relevant financial or non-financial interests to disclose.
%bound the commutator set? bound AAA? improve the bound using https://arxiv.org/pdf/1809.02214v1.pdf
%can this be used to estimate kissing numbers? e.g. in R^16 using sedenions?

\end{document}